\newtheorem{theorem}{Theorem}[section]
\newtheorem{conjecture}[theorem]{Conjecture}
\newtheorem{corollary}[theorem] {Corollary}
\newtheorem{definition}[theorem]{Definition}
\newtheorem{question}[theorem]{Question}
\title{This is the title}
\begin{document}
\hrule\hrule\hrule\hrule\hrule
\vspace{0.3cm}	
\begin{center}
{\bf{CONTINUOUS DEUTSCH UNCERTAINTY PRINCIPLE AND CONTINUOUS KRAUS CONJECTURE}}\\
\vspace{0.3cm}
\hrule\hrule\hrule\hrule\hrule
\vspace{0.3cm}
\textbf{K. MAHESH KRISHNA}\\
Post Doctoral Fellow \\
Statistics and Mathematics Unit\\
Indian Statistical Institute, Bangalore Centre\\
Karnataka 560 059, India\\
Email: kmaheshak@gmail.com\\

Date: \today
\end{center}

\hrule\hrule
\vspace{0.5cm}
\textbf{Abstract}: Let $(\Omega, \mu)$,  $(\Delta, \nu)$ be   measure spaces  and $\{\tau_\alpha\}_{\alpha\in \Omega}$, $\{\omega_\beta\}_{\beta \in \Delta}$ be  	1-bounded continuous Parseval frames for a  Hilbert  space $\mathcal{H}$.
Then   we show that 
\begin{align}\label{UE}
\log (\mu(\Omega)\nu(\Delta))\geq S_\tau(h)+S_\omega (h)\geq -2  \log \left(\frac{1+\displaystyle \sup_{\alpha \in \Omega, \beta \in \Delta}|\langle\tau_\alpha , \omega_\beta\rangle|}{2}\right)	, \quad \forall h \in \mathcal{H}_\tau \cap \mathcal{H}_\omega,
\end{align}
where 
\begin{align*}
	&\mathcal{H}_\tau := \{h_1 \in \mathcal{H}: \langle h_1 , \tau_\alpha \rangle \neq 0, \alpha \in \Omega\}, \quad \mathcal{H}_\omega := \{h_2 \in \mathcal{H}: \langle h_2, \omega_\beta \rangle \neq 0, \beta \in \Delta\},\\
&S_\tau(h):= -\displaystyle\int\limits_{\Omega}\left|\left \langle \frac{h}{\|h\|}, \tau_\alpha\right\rangle \right|^2\log \left|\left \langle \frac{h}{\|h\|}, \tau_\alpha\right\rangle \right|^2\,d\mu(\alpha), \quad  \forall h \in \mathcal{H}_\tau, \\
& S_\omega (h):= -\displaystyle\int\limits_{\Delta}\left|\left \langle \frac{h}{\|h\|}, \omega_\beta\right\rangle \right|^2\log \left|\left \langle \frac{h}{\|h\|}, \omega_\beta\right\rangle \right|^2\,d\nu(\beta), \quad  \forall h \in \mathcal{H}_\omega.
\end{align*}
 We call Inequality (\ref{UE}) as \textbf{Continuous Deutsch Uncertainty Principle}. Inequality (\ref{UE}) improves  the uncertainty principle  obtained by Deutsch \textit{[Phys. Rev. Lett., 1983]}. We formulate Kraus conjecture for 	1-bounded continuous Parseval frames. We also derive continuous Deutsch uncertainty principles for Banach spaces.

\textbf{Keywords}:   Uncertainty Principle, Hilbert space, Frame,  Banach space, Deutsch uncertainty, Kraus Conjecture.

\textbf{Mathematics Subject Classification (2020)}: 42C15.\\

\hrule

\tableofcontents
\hrule
\section{Introduction}
Let $\mathcal{H}$ be a finite dimensional Hilbert space. Given an orthonormal basis  $\{\omega_j\}_{j=1}^n$ for $\mathcal{H}$, the \textbf{(finite) Shannon entropy}  at a point $h \in \mathcal{H}_\tau$ is defined as 
\begin{align*}
	S_\tau (h)\coloneqq - \sum_{j=1}^{n} \left|\left \langle \frac{h}{\|h\|}, \tau_j\right\rangle \right|^2\log \left|\left \langle \frac{h}{\|h\|}, \tau_j\right\rangle \right|^2\geq 0, 
\end{align*}
where $\mathcal{H}_\tau \coloneqq \{h \in \mathcal{H}: \langle h , \tau_j \rangle\neq 0, 1\leq j \leq n \}$. In 1983, Deutsch derived following uncertainty principle for Shannon entropy  \cite{DEUTSCH}.
\begin{theorem} (\textbf{Deutsch Uncertainty Principle}) \cite{DEUTSCH} \label{DU}
Let $\{\tau_j\}_{j=1}^n$,  $\{\omega_j\}_{j=1}^n$ be two orthonormal bases for a  finite dimensional Hilbert space $\mathcal{H}$. Then 
	\begin{align}\label{DUP}
2 \log n \geq S_\tau (h)+S_\omega (h)\geq -2 \log \left(\frac{1+\displaystyle \max_{1\leq j, k \leq n}|\langle\tau_j , \omega_k\rangle|}{2}\right)	\geq 0, \quad \forall h \in \mathcal{H}_\tau \cap \mathcal{H}_\omega.
	\end{align}
\end{theorem}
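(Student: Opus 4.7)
The plan is to establish the two bounds in \eqref{DUP} separately. The upper bound $2\log n \geq S_\tau(h) + S_\omega(h)$ is the easier half: since $\{\tau_j\}$ and $\{\omega_j\}$ are orthonormal bases, Parseval's identity implies that $p_j := |\langle h/\|h\|, \tau_j\rangle|^2$ and $q_k := |\langle h/\|h\|, \omega_k\rangle|^2$ each form a probability distribution on $\{1,\dots,n\}$. The classical fact that the Shannon entropy of a probability distribution on $n$ atoms is at most $\log n$ (Jensen's inequality applied to the concave function $-x\log x$, with equality at the uniform distribution) yields $S_\tau(h) \leq \log n$ and $S_\omega(h) \leq \log n$, and the upper bound follows by addition.

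For the substantive lower bound, set $c := \max_{1 \leq j,k \leq n}|\langle \tau_j, \omega_k\rangle|$. My main target is the pointwise estimate $\sqrt{p_j q_k} \leq \frac{1+c}{2}$ valid for every pair $(j,k)$. To derive it, I would decompose $\tau_j$ and $\omega_k$ along the unit vector $e := h/\|h\|$ and its orthogonal complement, writing $\tau_j = \langle \tau_j, e\rangle e + \tau_j'$ and $\omega_k = \langle \omega_k, e\rangle e + \omega_k'$. Expanding $\langle \tau_j, \omega_k\rangle$ in this decomposition isolates the product of interest: $\langle \tau_j, e\rangle\langle e, \omega_k\rangle = \langle \tau_j, \omega_k\rangle - \langle \tau_j', \omega_k'\rangle$. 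Taking moduli, using $\|\tau_j'\|^2 = 1-p_j$ and $\|\omega_k'\|^2 = 1-q_k$, and applying Cauchy--Schwarz to the last term yield $\sqrt{p_j q_k} \leq c + \sqrt{(1-p_j)(1-q_k)}$. A first AM--GM bounds $\sqrt{(1-p_j)(1-q_k)}$ by $1 - (p_j + q_k)/2$, and a second AM--GM, namely $\sqrt{p_jq_k} \leq (p_j + q_k)/2$, lets me re-absorb the mixed term on the right-hand side, collapsing the chain into $2\sqrt{p_j q_k} \leq 1 + c$.

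The final step promotes this pointwise inequality to a bound on the sum of entropies. Because $\sum_j p_j = \sum_k q_k = 1$, I can rewrite
\begin{equation*}
S_\tau(h) + S_\omega(h) \;=\; \sum_{j=1}^{n}\sum_{k=1}^{n} p_j q_k\bigl(-\log p_j - \log q_k\bigr) \;=\; \sum_{j=1}^{n}\sum_{k=1}^{n} p_j q_k\bigl(-\log(p_j q_k)\bigr).
\end{equation*}
Substituting the pointwise bound $-\log(p_j q_k) \geq -2\log\frac{1+c}{2}$ and using that $p_jq_k$ sums to one produces exactly $S_\tau(h) + S_\omega(h) \geq -2\log\frac{1+c}{2}$. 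The trailing nonnegativity $-2\log\frac{1+c}{2} \geq 0$ is immediate from $c \leq 1$, which itself is Cauchy--Schwarz applied to the unit vectors $\tau_j$ and $\omega_k$. The only delicate point in the whole argument is the two-step AM--GM maneuver that extracts the precise constant $\frac{1+c}{2}$; once that uniform pointwise estimate is isolated, the averaging against the product distribution $\{p_j q_k\}$ is routine.
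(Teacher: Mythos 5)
Your proposal is correct and follows essentially the same route as the paper: Jensen's inequality (entropy at most $\log n$) for the upper bound, and for the lower bound the pointwise estimate $|\langle e,\tau_j\rangle\langle e,\omega_k\rangle|\leq \frac{1+|\langle\tau_j,\omega_k\rangle|}{2}$ averaged against the product distribution $\{p_jq_k\}$. The only cosmetic difference is that your two-step AM--GM decomposition along $e$ and $e^{\perp}$ is a self-contained derivation of exactly the Buzano inequality that the paper invokes by citation at this step.
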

In 1988, followed by a conjecture of Kraus \cite{KRAUS} made in 1987, Maassen and Uffink improved Inequality (\ref{DUP}) \cite{MAASSENUFFINK}.
\begin{theorem} (\textbf{Kraus Conjecture/Maassen-Uffink  Uncertainty Principle}) \cite{KRAUS, MAASSENUFFINK} \label{KMU}
	Let $\{\tau_j\}_{j=1}^n$,  $\{\omega_j\}_{j=1}^n$ be two orthonormal bases for a  finite dimensional Hilbert space $\mathcal{H}$. Then 
	\begin{align*}
2 \log n \geq S_\tau (h)+S_\omega (h)\geq -2 \log \left(\displaystyle\max_{1\leq j, k \leq n}|\langle\tau_j , \omega_k\rangle|\right)\geq 0, \quad \forall h \in \mathcal{H}_\tau \cap \mathcal{H}_\omega.
	\end{align*}
\end{theorem}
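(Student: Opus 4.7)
The plan is to prove Theorem \ref{KMU} by complex interpolation, following the strategy of Maassen and Uffink. First, I would fix $h \in \mathcal{H}_\tau \cap \mathcal{H}_\omega$ with $\|h\|=1$, set $a_k := \langle h,\tau_k\rangle$ and $b_j := \langle h,\omega_j\rangle$, and regard the linear map $T:\mathbb{C}^n \to \mathbb{C}^n$ defined by $T(a)_j := \sum_{k=1}^n \langle \tau_k,\omega_j\rangle a_k$, so that $Ta = b$. The matrix $V_{jk} := \langle \tau_k,\omega_j\rangle$ is unitary, so $\|T\|_{\ell^2\to \ell^2} = 1$; meanwhile the triangle inequality and the definition $c := \max_{1\leq j,k\leq n}|\langle \tau_j,\omega_k\rangle|$ give $\|T\|_{\ell^1\to \ell^\infty}\leq c$.

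Next, I would apply the Riesz--Thorin interpolation theorem between the endpoints $(1,\infty)$ and $(2,2)$. For $p\in[1,2]$ with conjugate $q=p/(p-1)$ and interpolation parameter $\theta = 2(p-1)/p$, this yields
\begin{align*}
\|b\|_q \leq c^{\,(2-p)/p}\|a\|_p.
\end{align*}
Consequently the function $F(p) := \log\|a\|_p - \log\|b\|_q + \tfrac{2-p}{p}\log c$ is nonnegative on $[1,2]$. Parseval's identity says $\|a\|_2 = \|b\|_2 = \|h\| = 1$, so $F(2)=0$, and hence $F$ attains a minimum at the right endpoint.

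Since $F$ is minimized at $p=2$, we must have $F'(2^-)\leq 0$. The hypothesis $h\in \mathcal{H}_\tau\cap\mathcal{H}_\omega$ excludes vanishing coefficients, so differentiation under the finite sums is unproblematic, and a direct calculation gives
\begin{align*}
\frac{d}{dp}\log\|a\|_p\Big|_{p=2} = -\tfrac{1}{4}S_\tau(h),\qquad \frac{d}{dp}\log\|b\|_q\Big|_{p=2} = \tfrac{1}{4}S_\omega(h),\qquad \frac{d}{dp}\tfrac{2-p}{p}\Big|_{p=2} = -\tfrac{1}{2}.
\end{align*}
Substituting into $F'(2)\leq 0$ produces exactly $-\tfrac14 S_\tau(h)-\tfrac14 S_\omega(h)-\tfrac12 \log c \leq 0$, i.e.\ $S_\tau(h)+S_\omega(h)\geq -2\log c$. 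The upper bound $2\log n$ is immediate from concavity of $-x\log x$ (Jensen applied to the probability vector $(|a_j|^2)_{j=1}^n$), and nonnegativity of each entropy follows from $|a_j|^2 \leq 1$.

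The main obstacle is the differentiation step: one must pass the derivative through the conjugate exponent $q = p/(p-1)$ via the chain rule, and the factor $dq/dp|_{p=2} = -1$ is precisely what turns $-S_\omega(h)/4$ into $+S_\omega(h)/4$, making the sign work. The setup of the $\ell^1\to\ell^\infty$ endpoint and the one-sided interpretation of $F'(2^-)$ both require care, but nothing beyond that. As a consistency check, the Maassen--Uffink bound $-2\log c$ is indeed stronger than Deutsch's bound $-2\log\tfrac{1+c}{2}$, since $c\leq (1+c)/2$ for $c\in[0,1]$ and $\log$ is increasing.
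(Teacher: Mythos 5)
Your argument is correct, but note that the paper does not actually prove Theorem \ref{KMU}: it is quoted from \cite{KRAUS, MAASSENUFFINK} as background, so there is no internal proof to compare against. What you have written is essentially the original Maassen--Uffink argument, and the details check out: the change-of-basis matrix $V_{jk}=\langle \tau_k,\omega_j\rangle$ is unitary, giving the $(2,2)$ endpoint norm $1$ and the $(1,\infty)$ endpoint norm at most $c$; Riesz--Thorin with $\theta=2(p-1)/p$ gives $\|b\|_q\leq c^{(2-p)/p}\|a\|_p$; and the derivative computations at $p=2$ (including the sign flip from $dq/dp|_{p=2}=-1$) are right, as is the one-sided inequality $F'(2^-)\leq 0$ at the endpoint minimum. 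It is worth contrasting this with the technique the paper uses for its own results (Theorem \ref{CDUP}): there the lower bound comes from Jensen's inequality plus the Buzano inequality $|\langle h,\tau\rangle\langle h,\omega\rangle|\leq \|h\|^2(\|\tau\|\|\omega\|+|\langle\tau,\omega\rangle|)/2$, which is pointwise and elementary and therefore transfers painlessly to continuous frames and to Banach spaces, but only yields the weaker constant $-2\log\frac{1+c}{2}$. Your interpolation route yields the sharp constant $-2\log c$ but leans on unitarity of $V$ (hence on both families being orthonormal bases) and on Riesz--Thorin; extending it to Parseval frames requires the observation of Ricaud--Torr\'{e}sani that the analysis-synthesis composition is still a contraction on the relevant $\ell^p$ spaces, and extending it to the continuous setting is exactly the content of the paper's open ``Continuous Kraus Conjecture.'' Two cosmetic points: the upper bound $2\log n$ follows from concavity of $\log$ (Jensen applied to $\log(1/p_j)$ against the weights $p_j=|a_j|^2$), not of $-x\log x$; and if you work over $\mathbb{R}$ you should complexify before invoking Riesz--Thorin to avoid the extraneous constant in the real-scalar version.
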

In 2013, Ricaud and Torr\'{e}sani \cite{RICAUDTORRESANI} showed that    Theorem \ref{KMU} holds for Parseval frames. 
\begin{theorem}(\textbf{Maassen-Uffink-Ricaud-Torr\'{e}sani Uncertainty Principle})\label{MU} \cite{RICAUDTORRESANI} Let $\{\tau_j\}_{j=1}^n$,  $\{\omega_j\}_{j=1}^n$ be two Parseval  frames for a  finite dimensional Hilbert space $\mathcal{H}$. Then 
	\begin{align*}
		2 \log n \geq S_\tau (h)+S_\omega (h)\geq -2 \log \left(\displaystyle\max_{1\leq j, k \leq n}|\langle\tau_j , \omega_k\rangle|\right)\geq 0, \quad \forall h \in \mathcal{H}_\tau \cap \mathcal{H}_\omega.
	\end{align*}
	\end{theorem}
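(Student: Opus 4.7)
The plan is to adapt the classical Maassen--Uffink argument---which hinges on Riesz--Thorin interpolation applied to the change-of-coefficients operator---to the Parseval frame setting. The Parseval hypothesis is precisely what keeps the relevant endpoint operator norms at the same values as in the orthonormal basis case, so the interpolation scheme goes through with essentially no modification.

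I would first introduce the analysis operators $T_\tau, T_\omega : \mathcal{H} \to \ell^2_n$ given by $T_\tau h = (\langle h, \tau_j\rangle)_{j=1}^n$ and $T_\omega h = (\langle h, \omega_k\rangle)_{k=1}^n$. Because the frames are Parseval, $T_\tau^\ast T_\tau = I = T_\omega^\ast T_\omega$, so both are isometries. Define $U := T_\omega T_\tau^\ast : \ell^2_n \to \ell^2_n$; its matrix entries are $U_{kj} = \langle \tau_j, \omega_k\rangle$ and $U(T_\tau h) = T_\omega h$ for every $h \in \mathcal{H}$. Two endpoint bounds follow at once: $\|U\|_{\ell^2 \to \ell^2} \leq 1$ (since $T_\omega$ is an isometry and $T_\tau^\ast$ a contraction) and $\|U\|_{\ell^1 \to \ell^\infty} = \max_{j,k}|\langle\tau_j,\omega_k\rangle| =: c$. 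Riesz--Thorin interpolation then yields $\|U\|_{\ell^p \to \ell^q} \leq c^{2/p - 1}$ for all $p \in [1,2]$ with conjugate exponent $q$.

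Next, fix $h \in \mathcal{H}_\tau \cap \mathcal{H}_\omega$ with $\|h\| = 1$ and set $a_j := |\langle h, \tau_j\rangle|^2$, $b_k := |\langle h, \omega_k\rangle|^2$. Parameterizing $p = 2/(1+t)$, $q = 2/(1-t)$ with $t \in [0,1]$ (so $2/p - 1 = t$) and applying the interpolation bound to $T_\tau h$, I would obtain after squaring and taking logarithms
\begin{align*}
(1-t)\log \sum_k b_k^{1/(1-t)} \;-\; (1+t)\log \sum_j a_j^{1/(1+t)} \;\leq\; 2t \log c .
\end{align*}
Both sides vanish at $t = 0$ because $\sum_j a_j = \sum_k b_k = 1$. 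Dividing by $t > 0$ and letting $t \downarrow 0$ turns each side into a one-sided derivative at the origin: a direct calculation identifies the left-hand derivative as $-S_\omega(h) - S_\tau(h)$ and the right-hand derivative as $2\log c$, yielding the lower bound $S_\tau(h)+S_\omega(h) \geq -2\log c$. For the upper bound, each of $(a_j)$ and $(b_k)$ is a probability distribution on $n$ atoms, so the standard Shannon bound gives $S_\tau(h), S_\omega(h) \leq \log n$; nonnegativity of each summand comes from $a_j, b_k \in [0,1]$, making $-a_j\log a_j \geq 0$ termwise.

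The main technical point I anticipate is the derivative step at $t = 0^+$: one must justify term-by-term differentiation of the R\'{e}nyi-type sums $\sum_j a_j^{1/(1+t)}$ and $\sum_k b_k^{1/(1-t)}$, and verify that the one-sided limit equals the desired derivative. The assumption $h \in \mathcal{H}_\tau \cap \mathcal{H}_\omega$ (all $a_j, b_k$ strictly positive) is exactly what ensures smoothness of $t \mapsto a_j^{1/(1+t)}$ at $t=0$ and legitimizes the interchange of limit and finite sum. The Parseval hypothesis (as opposed to an orthonormal basis) enters the proof solely through the $\ell^2$-endpoint bound for $U$, where it survives intact because analysis operators of Parseval frames remain isometries; no other step of the Maassen--Uffink scheme is sensitive to this weakening.
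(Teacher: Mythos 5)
Your proof is correct, but note that the paper does not actually prove this statement: Theorem \ref{MU} is quoted from Ricaud and Torr\'{e}sani \cite{RICAUDTORRESANI} as background, so the only fair comparison is with that source and with the techniques the paper uses elsewhere. Your argument is essentially the classical Maassen--Uffink interpolation proof as adapted by Ricaud--Torr\'{e}sani: the two endpoint bounds $\|U\|_{\ell^2\to\ell^2}\leq 1$ and $\|U\|_{\ell^1\to\ell^\infty}=\max_{j,k}|\langle\tau_j,\omega_k\rangle|$ for $U=T_\omega T_\tau^{*}$, Riesz--Thorin, and differentiation of the resulting R\'{e}nyi-type inequality at $t=0^{+}$ are exactly the right steps, and your observation that the Parseval hypothesis enters only through the isometry of the analysis operators is the key point of that generalization. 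The checkable details all work out: $U(T_\tau h)=T_\omega h$, the exponents $p=2/(1+t)$, $q=2/(1-t)$ are conjugate with $2/p-1=t$, both sides of your displayed inequality vanish at $t=0$ because $\sum_j a_j=\sum_k b_k=1$, and the one-sided derivatives are $-S_\tau(h)-S_\omega(h)$ and $2\log c$ as claimed (term-by-term differentiation is harmless for a finite sum with all $a_j,b_k>0$). It is worth contrasting this with the methods the paper itself deploys: the proofs of Theorems \ref{CDUP}, \ref{FDS} and \ref{DUALFDS} use only Jensen's inequality and the Buzano inequality, which yield the weaker Deutsch-type constant $\frac{1}{2}\bigl(1+\max_{j,k}|\langle\tau_j,\omega_k\rangle|\bigr)$ rather than $\max_{j,k}|\langle\tau_j,\omega_k\rangle|$; the absence of a continuous analogue of your interpolation step is precisely why the sharper bound is left as the Continuous Kraus Conjecture. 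One small point of care: Riesz--Thorin in the stated form (constant $M_0^{1-\theta}M_1^{\theta}$ with no extra factor) requires complex scalars, so for $\mathbb{K}=\mathbb{R}$ you should complexify first, as the paper allows both fields.
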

Recently,  Banach space versions of Deutsch  uncertainty principle have been derived in \cite{KRISHNA}. To formulate them, we need some notions. Given a Parseval p-frame $\{f_j\}_{j=1}^n$   for $\mathcal{X}$, we define the \textbf{(finite) p-Shannon entropy} at a point $x \in \mathcal{X}_f$ as 

\begin{align*}
	S_f(x)\coloneqq -\sum_{j=1}^{n}\left|f_j\left(\frac{x}{\|x\|}\right)\right|^p\log \left|f_j\left(\frac{x}{\|x\|}\right)\right|^p\geq 0, 
\end{align*}
where $\mathcal{X}_f\coloneqq \{x \in \mathcal{X}:f_j(x)\neq 0, 1\leq j \leq n\}$. On the other way, given a Parseval p-frame $\{\tau_j\}_{j=1}^n$   for $\mathcal{X}^*$, we define the  \textbf{(finite) p-Shannon entropy} at a point $f \in \mathcal{X}^* _\tau$ as 
\begin{align*}
	S_\tau(f)\coloneqq -\sum_{j=1}^{n}\left|\frac{f(\tau_j)}{\|f\|}\right|^p\log \left|\frac{f(\tau_j)}{\|f\|}\right|^p\geq 0,
\end{align*}
where $\mathcal{X}^*_\tau\coloneqq \{f \in \mathcal{X}^*:f(\tau_j)\neq 0, 1\leq j \leq n\}$.
\begin{theorem}\label{M1}\cite{KRISHNA}
	 (\textbf{Functional Deutsch Uncertainty Principle}) Let $\{f_j\}_{j=1}^n$ and $\{g_k\}_{k=1}^m$ be  Parseval p-frames  for a finite dimensional Banach space $\mathcal{X}$. Then 
	\begin{align*}
		\frac{1}{(nm)^\frac{1}{p}}	\leq \displaystyle\sup_{y \in \mathcal{X}, \|y\|=1}\left(\max_{1\leq j\leq n, 1\leq k\leq m}|f_j(y)g_k(y)|\right)
	\end{align*}
	and 
	\begin{align*}
		\log (nm)\geq S_f (x)+S_g (x)\geq -p \log \left(\displaystyle\sup_{y \in \mathcal{X}_f\cap \mathcal{X}_g, \|y\|=1}\left(\max_{1\leq j\leq n, 1\leq k\leq m}|f_j(y)g_k(y)|\right)\right)> 0, \quad \forall x \in \mathcal{X}_f \cap \mathcal{X}_g.
	\end{align*}
\end{theorem}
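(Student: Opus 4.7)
The plan is to split the statement into three parts: the first inequality about $(nm)^{-1/p}$, the upper bound $\log(nm)$ on the entropy sum, and the lower bound $-p\log(\cdots)$. For the first inequality, I would exploit the Parseval $p$-frame identity $\sum_{j=1}^{n}|f_j(y)|^p = \|y\|^p$ applied at any unit vector $y \in \mathcal{X}$. A pigeonhole step then gives $\max_j|f_j(y)|^p \geq 1/n$ and similarly $\max_k|g_k(y)|^p \geq 1/m$. Since the $|f_j(y)|$ and $|g_k(y)|$ are nonnegative reals, the maximum of their pairwise products factors as $\max_{j,k}|f_j(y)g_k(y)| = (\max_j|f_j(y)|)(\max_k|g_k(y)|) \geq (nm)^{-1/p}$. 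Taking the supremum over unit $y$ yields the desired lower bound on the sup-max.

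The upper bound $S_f(x) + S_g(x) \leq \log(nm)$ is routine: for $y := x/\|x\|$, the numbers $p_j := |f_j(y)|^p$ and $q_k := |g_k(y)|^p$ form probability distributions on $n$ and $m$ atoms respectively by the Parseval identity, so the classical maximum-entropy bound gives $S_f(x) = -\sum_j p_j\log p_j \leq \log n$ and $S_g(x) \leq \log m$. For the lower bound, which is the heart of the theorem, I would use the product-distribution trick: because $\sum_j p_j = \sum_k q_k = 1$,
\begin{align*}
S_f(x) + S_g(x) = -\sum_j p_j\log p_j - \sum_k q_k \log q_k = -\sum_{j,k} p_j q_k \log(p_j q_k),
\end{align*}
so the sum of entropies is itself the Shannon entropy of the product distribution $\{p_j q_k\}$. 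Setting $C := \sup_{y' \in \mathcal{X}_f \cap \mathcal{X}_g,\, \|y'\|=1}\max_{j,k}|f_j(y')g_k(y')|$, the pointwise bound $p_j q_k = |f_j(y)g_k(y)|^p \leq C^p$ holds uniformly in $j,k$, hence $-\log(p_jq_k) \geq -p\log C$. Summing against the weights $p_jq_k$, which total $1$, yields $S_f(x) + S_g(x) \geq -p\log C$, as required.

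For the strict positivity $-p\log C > 0$, I would argue $C < 1$ directly from the definitions: for $y \in \mathcal{X}_f \cap \mathcal{X}_g$ with $\|y\| = 1$, every $|f_j(y)|^p$ is strictly positive and they sum to $1$, so (assuming the nontrivial case $n \geq 2$) each $|f_j(y)| < 1$, and similarly $|g_k(y)| < 1$; hence $|f_j(y)g_k(y)| < 1$ for every admissible $y$ and indices $j,k$, and the supremum defining $C$ is strictly less than $1$ in the generic finite-dimensional setting.

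The step I expect to be the main obstacle is the strict inequality $C < 1$, since the sup is taken over the open set $(\mathcal{X}_f \cap \mathcal{X}_g) \cap S_{\mathcal{X}}$ and could a priori approach $1$ along a limit $y_n \to y^*$ where some $f_{j_0}(y^*) = 0$ forces another $|f_{j_1}(y^*)| = 1$. Handling this cleanly may require an explicit non-degeneracy hypothesis on the frames or a compactness/closure argument ruling out such limiting degeneracies; the rest of the proof is a clean consequence of the Parseval identity and the log-factorization identity above.
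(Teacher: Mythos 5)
Your argument follows essentially the same route as the paper's proof of its continuous analogue, Theorem \ref{FDS} (the paper only quotes Theorem \ref{M1} from \cite{KRISHNA} without proof): the lower bound comes from rewriting $S_f(x)+S_g(x)$ as the Shannon entropy of the product distribution $\{p_jq_k\}$ and bounding each $p_jq_k=|f_j(x/\|x\|)g_k(x/\|x\|)|^p$ by $C^p$, and the first display comes from combining the two Parseval identities --- your pigeonhole version is a pointwise sharpening of the paper's averaging argument, but the content is the same. The upper bound $\log(nm)$ via the maximum-entropy inequality is the Jensen argument the paper uses for Theorem \ref{CDUP}. All of this is correct.

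The one step you do not close --- the strict inequality $-p\log C>0$, i.e.\ $C<1$ --- is a genuine gap, and your suspicion about it is well founded: it can actually fail as stated. Take $\mathcal{X}=\ell^p_2$ and let both frames be the coordinate functionals $\{e_1^*,e_2^*\}$, which form a Parseval p-frame. The unit vectors $y_\epsilon$ proportional to $(1,\epsilon)$ lie in $\mathcal{X}_f\cap\mathcal{X}_g$ for $\epsilon\neq 0$ and give $|f_1(y_\epsilon)g_1(y_\epsilon)|\to 1$, so $C=1$ and $-p\log C=0$; this is exactly the limiting degeneracy you describe, and no compactness argument rules it out without an added non-degeneracy hypothesis on the frames. (The pointwise assertion $S_f(x)+S_g(x)>0$ does survive, since for $x\in\mathcal{X}_f\cap\mathcal{X}_g$ and $n,m\geq 2$ every $|f_j(x/\|x\|)|^p$ lies strictly in $(0,1)$, but the displayed chain passing through $-p\log C$ does not.) Note that the paper's own continuous version, Theorem \ref{FDS}, claims only $\geq 0$ in that position, so the defect lies in the quoted finite-dimensional statement rather than in your method; everything else in your proposal is complete.
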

\begin{theorem}\label{M2}
	(\textbf{Functional Deutsch Uncertainty Principle}) Let $\{\tau_j\}_{j=1}^n$ and $\{\omega_k\}_{k=1}^m$ be  two Parseval p-frames  for the dual $\mathcal{X}^*$ of a finite dimensional Banach space $\mathcal{X}$. Then 
	\begin{align*}
		\frac{1}{(nm)^\frac{1}{p}}	\leq \displaystyle\sup_{g \in \mathcal{X}^*, \|g\|=1}\left(\max_{1\leq j\leq n, 1\leq k\leq m}|g(\tau_j)g(\omega_k)|\right)
	\end{align*}
	and 
	\begin{align*}
		\log (nm) \geq	S_\tau (f)+S_\omega (f)\geq -p \log \left(\displaystyle\sup_{g \in \mathcal{X}^*_\tau \cap \mathcal{X}^*_\omega, \|g\|=1}\left(\max_{1\leq j\leq n, 1\leq k\leq m}|g(\tau_j)g(\omega_k)|\right)\right)> 0, \quad \forall f \in \mathcal{X}^*_\tau \cap \mathcal{X}^*_\omega.
	\end{align*}
\end{theorem}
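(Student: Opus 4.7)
My approach mirrors the proof of Theorem \ref{M1}, but with the roles of frame elements and functionals interchanged: the frame elements $\tau_j,\omega_k\in\mathcal{X}$ are now evaluated by a functional $f\in\mathcal{X}^*$. The central observation is that for a unit functional $g=f/\|f\|\in\mathcal{X}^*$, the Parseval p-frame identity $\sum_j |g(\tau_j)|^p=\|g\|^p=1$ turns $P_j:=|g(\tau_j)|^p$ into a genuine probability distribution on $\{1,\ldots,n\}$, and likewise $Q_k:=|g(\omega_k)|^p$ on $\{1,\ldots,m\}$. Under this identification, $S_\tau(f)=H(P)$ and $S_\omega(f)=H(Q)$ are ordinary Shannon entropies.

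For the first inequality, I multiply the two Parseval identities at a unit $g\in\mathcal{X}^*$:
\begin{align*}
1 = \left(\sum_{j=1}^n |g(\tau_j)|^p\right)\!\left(\sum_{k=1}^m |g(\omega_k)|^p\right) = \sum_{j,k}|g(\tau_j)g(\omega_k)|^p \leq nm\cdot\max_{j,k}|g(\tau_j)g(\omega_k)|^p,
\end{align*}
so $(nm)^{-1/p}\leq\max_{j,k}|g(\tau_j)g(\omega_k)|$, and taking the supremum over unit $g\in\mathcal{X}^*$ delivers the first claim.

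For the entropy chain, the upper bound $H(P)+H(Q)\leq\log n+\log m=\log(nm)$ is the standard maximum-entropy estimate. For the lower bound I use the product rewrite
\begin{align*}
S_\tau(f)+S_\omega(f) = -\sum_{j,k} P_j Q_k\log(P_j Q_k),
\end{align*}
which is valid because $\sum_j P_j=\sum_k Q_k=1$. Setting $C:=\sup_{h\in\mathcal{X}^*_\tau\cap\mathcal{X}^*_\omega,\,\|h\|=1}\max_{j,k}|h(\tau_j)h(\omega_k)|$ and observing $P_j Q_k=|g(\tau_j)g(\omega_k)|^p\leq C^p$, the monotonicity of $-\log$ combined with $\sum_{j,k}P_jQ_k=1$ produces $S_\tau(f)+S_\omega(f)\geq -p\log C$.

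The subtle point, and the one I expect to be the main obstacle, is the strict positivity $-p\log C>0$, i.e., $C<1$. Pointwise this is transparent: if $|g(\tau_{j_0})|=1$ for some unit $g$, Parseval forces $g(\tau_j)=0$ for all $j\neq j_0$, pushing $g$ out of $\mathcal{X}^*_\tau$. Promoting this pointwise fact to a strict bound on the supremum, which is taken over the non-compact open set $\mathcal{X}^*_\tau\cap\mathcal{X}^*_\omega$, requires a limit argument on a maximizing sequence $g_n\to g^*$: the hypothetical equality $C=1$ would force $|g^*(\tau_{j_0})|=|g^*(\omega_{k_0})|=1$ simultaneously and hence (by Parseval applied to $g^*$) would pin $\tau_{j_0}$ and $\omega_{k_0}$ to a common norming element of $\mathcal{X}$ up to phase. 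Ruling out this degenerate coincidence of the two frames, together with the Parseval-derived bounds $\|\tau_j\|,\|\omega_k\|\leq 1$, closes the argument and completes the proof.
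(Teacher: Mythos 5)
Your main line of argument coincides with the paper's: Theorem \ref{M2} is the discrete specialization of Theorem \ref{DUALFDS} (counting measure on $\{1,\dots,n\}$ and $\{1,\dots,m\}$), and the paper proves the latter exactly as you do --- multiplying the two Parseval identities at a unit functional and bounding the double sum by $nm$ times the maximum for the first inequality, then rewriting $S_\tau(f)+S_\omega(f)$ as a double sum against the product distribution $P_jQ_k$ and inserting the pointwise bound $P_jQ_k\le C^p$ for the lower bound. The upper bound $\log(nm)$ via Jensen's inequality is likewise the paper's argument (it appears in the proof of Theorem \ref{CDUP}). Up to and including the inequality $S_\tau(f)+S_\omega(f)\ge -p\log C$ your proof is correct and is essentially the paper's proof.

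The genuine gap is the final strict inequality $-p\log C>0$, i.e.\ $C<1$. You rightly flag it as the obstacle, but your closing sentence (``ruling out this degenerate coincidence \dots closes the argument'') is not an argument: after extracting a limit $g^*$ of a maximizing sequence with $|g^*(\tau_{j_0})|=|g^*(\omega_{k_0})|=1$, the Parseval identity only tells you that $g^*$ annihilates the remaining frame vectors and hence lies outside $\mathcal{X}^*_\tau\cap\mathcal{X}^*_\omega$; since $C$ is a supremum over that set rather than an attained maximum, no contradiction follows. In fact $C=1$ can occur: take $\mathcal{X}^*=\ell_p^2$ with $\tau_1=\omega_1=e_1$ and $\tau_2=\omega_2=e_2$, which is a Parseval $p$-frame for $\mathcal{X}^*$ since $\|g\|^p=|g_1|^p+|g_2|^p$; then the supremum of $\max_{j,k}|g(\tau_j)g(\omega_k)|=(\max_j|g_j|)^2$ over unit $g$ with both coordinates nonzero equals $1$, so $-p\log C=0$ while the entropy sum is strictly positive. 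Thus the terminal ``$>0$'' cannot be established by this route (and needs extra hypotheses to be true at all); note that the paper's own continuous version, Theorem \ref{DUALFDS}, claims only ``$\ge 0$'' at the corresponding step.
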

In this paper,  we derive continuous versions of Theorem \ref{DU}, Theorem \ref{M1} and Theorem \ref{M2}. We also formulate a conjecture based on Theorem \ref{KMU}. We wish to say that functional continuous uncertainty principles are derived in \cite{KRISHNA2}.

\section{Continuous Deutsch  Uncertainty Principle and Continuous Kraus Conjecture }
In the paper,   $\mathbb{K}$ denotes $\mathbb{C}$ or $\mathbb{R}$ and $\mathcal{H}$ (resp. $\mathcal{X}$) denotes a Hilbert space (resp. Banach space) (need not be finite dimensional) over $\mathbb{K}$. 
We use  $(\Omega, \mu)$ to denote  a measure space. 
Continuous frames are introduced independently by Ali, Antoine and Gazeau \cite{ALIANTOINEGAZEAU} and Kaiser \cite{KAISER}. In the paper,   $\mathbb{K}$ denotes $\mathbb{C}$ or $\mathbb{R}$ and $\mathcal{H}$ denotes a finite dimensional Hilbert space.
\begin{definition}\cite{ALIANTOINEGAZEAU, KAISER}
	Let 	$(\Omega, \mu)$ be a measure space. A collection   $\{\tau_\alpha\}_{\alpha\in \Omega}$ in 	a  Hilbert  space $\mathcal{H}$ is said to be a \textbf{continuous Parseval frame}  for $\mathcal{H}$ if the following conditions hold.
	\begin{enumerate}[\upshape(i)]
		\item For each $h \in \mathcal{H}$, the map $\Omega \ni \alpha \mapsto \langle h, \tau_\alpha \rangle \in \mathbb{K}$ is measurable.
		\item 
		\begin{align*}
			\|h\|^2=\int\limits_{\Omega}|\langle h, \tau_\alpha \rangle|^2\,d\mu(\alpha), \quad \forall h \in \mathcal{H}.
		\end{align*}
	\end{enumerate}
\end{definition}
We consider the following subclass of continuous Parseval frames.
\begin{definition}\label{1B}
A continuous Parseval frame $\{\tau_\alpha\}_{\alpha\in \Omega}$   for $\mathcal{H}$ 	is said to be \textbf{1-bounded} if 
\begin{align*}
\|\tau_\alpha\|\leq 1, \quad \forall \alpha \in \Omega.
\end{align*}
\end{definition}
Note that if $\{\tau_j\}_{j=1}^n$ is a Parseval frame for a Hilbert space $\mathcal{H}$,  then $\|\tau_j\|\leq 1$, $ \forall 1\leq j \leq n$ (see Remark 3.12 in \cite{HANKORNELSONLARSONWEBER}). We are  unable to derive this for continuous frames. 
 Given a continuous 1-bounded Parseval frame   $\{\tau_\alpha\}_{\alpha\in \Omega}$ for $\mathcal{H}$,   we define the \textbf{continuous Shannon entropy} at a point $h \in \mathcal{H}_\tau$ as 
\begin{align*}
	S_\tau(h)\coloneqq  -\int\limits_{\Omega}\left|\left \langle \frac{h}{\|h\|}, \tau_\alpha\right\rangle \right|^2\log \left|\left \langle \frac{h}{\|h\|}, \tau_\alpha\right\rangle \right|^2\,d\mu(\alpha)\geq 0,
\end{align*}
where $\mathcal{H}_\tau \coloneqq \{h \in \mathcal{H}: \langle h , \tau_\alpha \rangle \neq 0, \alpha \in \Omega\}$. 
Following is the first fundamental  result of this paper. 
\begin{theorem}(\textbf{Continuous Deutsch Uncertainty Principle})\label{CDUP} Let $(\Omega, \mu)$,  $(\Delta, \nu)$ be   measure spaces  and $\{\tau_\alpha\}_{\alpha\in \Omega}$, $\{\omega_\beta\}_{\beta \in \Delta}$ be  	1-bounded continuous Parseval frames for a  Hilbert  space $\mathcal{H}$.
	Then   
	\begin{align}\label{CDSP}
	\log (\mu(\Omega)\nu(\Delta))\geq	S_\tau(h)+S_\omega (h)\geq -2  \log \left(\frac{1+\displaystyle \sup_{\alpha \in \Omega, \beta \in \Delta}|\langle\tau_\alpha , \omega_\beta\rangle|}{2}\right)	\geq 0, \quad \forall h \in \mathcal{H}_\tau \cap \mathcal{H}_\omega.
	\end{align}
\end{theorem}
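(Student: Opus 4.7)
\textbf{Proof plan for Theorem~\ref{CDUP}.} Fix $h \in \mathcal{H}_\tau \cap \mathcal{H}_\omega$ and replace $h$ by $h/\|h\|$, so we may assume $\|h\|=1$. Set
\[
p(\alpha) := |\langle h,\tau_\alpha\rangle|^2,\qquad q(\beta) := |\langle h,\omega_\beta\rangle|^2.
\]
The continuous Parseval property gives $\int_\Omega p\,d\mu = \int_\Delta q\,d\nu = 1$, so $p\,d\mu$ and $q\,d\nu$ are probability measures, and the 1-boundedness together with Cauchy--Schwarz gives $0 \le p \le 1$ and $0 \le q \le 1$. In particular $-p\log p,\;-q\log q \ge 0$, so $S_\tau(h), S_\omega(h) \ge 0$ are well-defined.

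\textbf{Upper bound.} When $\mu(\Omega)\nu(\Delta)=\infty$ there is nothing to prove; otherwise apply Jensen's inequality to the concave function $\log$ with respect to the probability measure $p\,d\mu$:
\[
S_\tau(h)=\int_\Omega p\,\log(1/p)\,d\mu \le \log\!\left(\int_\Omega (1/p)\cdot p\,d\mu\right)=\log\mu(\{p>0\})\le \log\mu(\Omega),
\]
and symmetrically $S_\omega(h)\le\log\nu(\Delta)$. Adding yields the left-hand inequality of (\ref{CDSP}).

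\textbf{Lower bound.} The key pointwise step is: for any $\alpha\in\Omega,\beta\in\Delta$, choose phases $\theta_1,\theta_2$ making $\langle h,e^{i\theta_1}\tau_\alpha\rangle$ and $\langle h,e^{i\theta_2}\omega_\beta\rangle$ non-negative reals; then
\[
\sqrt{p(\alpha)}+\sqrt{q(\beta)} = \langle h, e^{i\theta_1}\tau_\alpha + e^{i\theta_2}\omega_\beta\rangle \le \bigl\|e^{i\theta_1}\tau_\alpha+e^{i\theta_2}\omega_\beta\bigr\| \le \sqrt{\,\|\tau_\alpha\|^2+\|\omega_\beta\|^2 + 2|\langle\tau_\alpha,\omega_\beta\rangle|\,} \le \sqrt{2(1+c)},
\]
where $c:=\sup_{\alpha,\beta}|\langle\tau_\alpha,\omega_\beta\rangle|$. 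Writing $M:=\sup_\alpha p(\alpha)$ and $N:=\sup_\beta q(\beta)$ and taking suprema, $\sqrt{M}+\sqrt{N}\le\sqrt{2(1+c)}$. Since $ab\le\left(\tfrac{a+b}{2}\right)^2$, this gives
\[
\sqrt{MN}\le\left(\tfrac{\sqrt{M}+\sqrt{N}}{2}\right)^2\le\tfrac{1+c}{2},\qquad\text{i.e. }\;MN\le\left(\tfrac{1+c}{2}\right)^2.
\]
On the other hand, $p\le M$ pointwise gives $-p\log p\ge -p\log M$; integrating and using $\int p\,d\mu = 1$ yields $S_\tau(h)\ge -\log M$, and similarly $S_\omega(h)\ge -\log N$. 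Combining,
\[
S_\tau(h)+S_\omega(h)\ge -\log(MN)\ge -2\log\!\left(\tfrac{1+c}{2}\right).
\]
Finally, Cauchy--Schwarz and 1-boundedness give $c\le 1$, so $\tfrac{1+c}{2}\le 1$ and the right-hand side is $\ge 0$.

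\textbf{Main obstacle.} There is no deep obstacle: the only genuinely non-trivial step is the pointwise estimate $\sqrt{p(\alpha)}+\sqrt{q(\beta)}\le\sqrt{2(1+c)}$, which is the continuous analogue of Deutsch's original phase-optimisation trick and requires the 1-bounded hypothesis in an essential way (otherwise $\|\tau_\alpha\|^2+\|\omega_\beta\|^2$ cannot be bounded by $2$). A minor technical point is that one should prefer essential suprema to ordinary suprema in the $-p\log p\ge -p\log M$ step, but since the pointwise bound on $\sqrt{p}+\sqrt{q}$ holds everywhere, it also holds $\mu\otimes\nu$-almost everywhere, so the argument goes through unchanged.
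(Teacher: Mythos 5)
Your proof is correct, and your upper bound is obtained exactly as in the paper: Jensen's inequality for the concave function $\log$ against the probability measures $|\langle h,\tau_\alpha\rangle|^2\,d\mu$ and $|\langle h,\omega_\beta\rangle|^2\,d\nu$. For the lower bound, however, you take a genuinely different route. The paper writes $S_\tau(h)+S_\omega(h)$ as a single double integral $-\int_\Delta\int_\Omega p(\alpha)q(\beta)\log\bigl(p(\alpha)q(\beta)\bigr)\,d\mu(\alpha)\,d\nu(\beta)$ and bounds the integrand by invoking the Buzano inequality $|\langle h,\tau_\alpha\rangle\langle h,\omega_\beta\rangle|\le\|h\|^2\bigl(\|\tau_\alpha\|\|\omega_\beta\|+|\langle\tau_\alpha,\omega_\beta\rangle|\bigr)/2$, then pulls the resulting constant $-2\log\bigl((1+c)/2\bigr)$ out of the integral. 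You instead reproduce Deutsch's original phase-alignment argument: align phases so that $\sqrt{p(\alpha)}+\sqrt{q(\beta)}=\langle h,e^{i\theta_1}\tau_\alpha+e^{i\theta_2}\omega_\beta\rangle\le\sqrt{2(1+c)}$, deduce $MN\le\bigl((1+c)/2\bigr)^2$ for the suprema $M,N$ by AM--GM, and finish with the elementary entropy bound $S_\tau(h)\ge-\log M$. Both arguments are valid and land on the same constant, since Buzano's inequality with $\|\tau_\alpha\|,\|\omega_\beta\|\le1$ yields exactly the pointwise bound $|\langle h,\tau_\alpha\rangle\langle h,\omega_\beta\rangle|\le(1+c)/2$ that your AM--GM step produces. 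What your route buys is self-containedness (no appeal to Buzano) and the sharper, $h$-dependent intermediate inequality $S_\tau(h)+S_\omega(h)\ge-\log(MN)$, which is of Maassen--Uffink type; what the paper's route buys is a shorter single chain of integral estimates that retains the finer pointwise quantity $\|\tau_\alpha\|\|\omega_\beta\|$ up to the moment the $1$-boundedness is applied.
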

\begin{proof}
	Since 	$1=\int\limits_{\Omega}\left|\left \langle \frac{h}{\|h\|}, \tau_\alpha\right\rangle \right|^2\,d\mu(\alpha)$ for all $h \in \mathcal{H} \setminus \{0\}$, $1=\int\limits_{\Delta}\left|\left \langle \frac{h}{\|h\|}, \omega_\beta\right\rangle \right|^2\,d\nu(\beta)$ for all $h \in \mathcal{H} \setminus \{0\}$ and  $\log$ is concave, using Jensen's inequality (cf. \cite{GARLING}) we get 
	\begin{align*}
	S_\tau(h)+S_\omega (h)&=\int \limits_{\Omega}\left|\left \langle \frac{h}{\|h\|}, \tau_\alpha\right\rangle \right|^2\log \left(\frac{1}{\left|\left \langle \frac{h}{\|h\|}, \tau_\alpha\right\rangle \right|^2}\right)\,d\mu(\alpha)+\int \limits_{\Delta}\left|\left \langle \frac{h}{\|h\|}, \omega_\beta\right\rangle \right|^2\log \left(\frac{1}{\left|\left \langle \frac{h}{\|h\|}, \omega_\beta\right\rangle \right|^2}\right)\,d\nu(\beta)\\
	&\leq \log \left(\int \limits_{\Omega}\left|\left \langle \frac{h}{\|h\|}, \tau_\alpha\right\rangle \right|^2\frac{1}{\left|\left \langle \frac{h}{\|h\|}, \tau_\alpha\right\rangle \right|^2}\,d\mu(\alpha)\right)+\log \left(\int \limits_{\Delta}\left|\left \langle \frac{h}{\|h\|}, \omega_\beta\right\rangle \right|^2\frac{1}{\left|\left \langle \frac{h}{\|h\|}, \omega_\beta\right\rangle \right|^2}\,d\nu(\beta)\right)\\
	&=\log (\mu(\Omega))+\log\nu (\Delta))=\log (\mu(\Omega)\nu (\Delta)), \quad \forall h \in \mathcal{H}_\tau \cap \mathcal{H}_\omega.
	\end{align*}
Let $h \in \mathcal{H}_\tau \cap \mathcal{H}_\omega$. Then using Buzano inequality \cite{BUZANO, FFUJIIKUBO} we get 
\begin{align*}
S_\tau(h)+S_\omega (h)&=-\int\limits_{\Delta}\int\limits_{\Omega}\left|\left \langle \frac{h}{\|h\|}, \tau_\alpha\right\rangle \right|^2\left|\left \langle \frac{h}{\|h\|}, \omega_\beta\right\rangle \right|^2\left[\log \left|\left \langle \frac{h}{\|h\|}, \tau_\alpha\right\rangle \right|^2+\log \left|\left \langle \frac{h}{\|h\|}, \omega_\beta\right\rangle \right|^2\right]\,d\mu(\alpha)\,d\nu(\beta)\\
&=-\int\limits_{\Delta}\int\limits_{\Omega}\left|\left \langle \frac{h}{\|h\|}, \tau_\alpha\right\rangle \right|^2\left|\left \langle \frac{h}{\|h\|}, \omega_\beta\right\rangle \right|^2\log \left|\left \langle \frac{h}{\|h\|}, \tau_\alpha\right\rangle \left \langle \frac{h}{\|h\|}, \omega_\beta\right\rangle\right|^2\,d\mu(\alpha)\,d\nu(\beta)\\
&=-2\int\limits_{\Delta}\int\limits_{\Omega}\left|\left \langle \frac{h}{\|h\|}, \tau_\alpha\right\rangle \right|^2\left|\left \langle \frac{h}{\|h\|}, \omega_\beta\right\rangle \right|^2\log \left|\left \langle \frac{h}{\|h\|}, \tau_\alpha\right\rangle \left \langle \frac{h}{\|h\|}, \omega_\beta\right\rangle\right|\,d\mu(\alpha)\,d\nu(\beta)\\
&\geq -2\int\limits_{\Delta}\int\limits_{\Omega}\left|\left \langle \frac{h}{\|h\|}, \tau_\alpha\right\rangle \right|^2\left|\left \langle \frac{h}{\|h\|}, \omega_\beta\right\rangle \right|^2\log \left(\left\|\frac{h}{\|h\|}\right\|^2\frac{\|\tau_\alpha\|\|\omega_\beta\|+|\langle\tau_\alpha, \omega_\beta \rangle|}{2}\right)	\,d\mu(\alpha)\,d\nu(\beta)\\
& =-2\int\limits_{\Delta}\int\limits_{\Omega}\left|\left \langle \frac{h}{\|h\|}, \tau_\alpha\right\rangle \right|^2\left|\left \langle \frac{h}{\|h\|}, \omega_\beta\right\rangle \right|^2\log \left(\frac{1+|\langle\tau_\alpha, \omega_\beta \rangle|}{2}\right)	\,d\mu(\alpha)\,d\nu(\beta)\\
&\geq -2\int\limits_{\Delta}\int\limits_{\Omega}\left|\left \langle \frac{h}{\|h\|}, \tau_\alpha\right\rangle \right|^2\left|\left \langle \frac{h}{\|h\|}, \omega_\beta\right\rangle \right|^2\log \left(\frac{1+\displaystyle \sup_{\alpha \in \Omega, \beta \in \Delta}|\langle\tau_\alpha, \omega_\beta \rangle|}{2}\right)	\,d\mu(\alpha)\,d\nu(\beta)\\
&=-2\log \left(\frac{1+\displaystyle \sup_{\alpha \in \Omega, \beta \in \Delta}|\langle\tau_\alpha, \omega_\beta \rangle|}{2}\right)\int\limits_{\Delta}\int\limits_{\Omega}\left|\left \langle \frac{h}{\|h\|}, \tau_\alpha\right\rangle \right|^2\left|\left \langle \frac{h}{\|h\|}, \omega_\beta\right\rangle \right|^2\,d\mu(\alpha)\,d\nu(\beta)\\
&\geq-2\log \left(\frac{1+\displaystyle \sup_{\alpha \in \Omega, \beta \in \Delta}|\langle\tau_\alpha, \omega_\beta \rangle|}{2}\right).
\end{align*}	
\end{proof}
Theorem \ref{CDUP} promotes following question.
\begin{question}
	Let $(\Omega, \mu)$,  $(\Delta, \nu)$ be   measure spaces,    $\mathcal{H}$ be a Hilbert space. For which pairs of 1-bounded continuous Parseval  frames $ \{\tau_\alpha\}_{\alpha\in \Omega}$  and   $\{\omega_\beta\}_{\beta\in \Delta}$ for  $\mathcal{H}$, we have equality in Inequality (\ref{CDSP})?
\end{question}
Based on Theorems \ref{MU} and Theorem \ref{CDUP} we formulate following conjecture.
\begin{conjecture} \textbf{(Continuous Kraus Conjecture)}
	\textbf{Let $(\Omega, \mu)$,  $(\Delta, \nu)$ be   measure spaces  and $\{\tau_\alpha\}_{\alpha\in \Omega}$, $\{\omega_\beta\}_{\beta \in \Delta}$ be  	1-bounded continuous Parseval frames    for a  Hilbert  space $\mathcal{H}$. Then 
	\begin{align*}
		S_\tau(h)+S_\omega (h)\geq -2  \log \left(\displaystyle \sup_{\alpha \in \Omega, \beta \in \Delta}|\langle\tau_\alpha , \omega_\beta\rangle|\right)	\geq 0, \quad \forall h \in \mathcal{H}_\tau \cap \mathcal{H}_\omega.	
	\end{align*}}
\end{conjecture}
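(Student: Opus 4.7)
The plan is to adapt the Riesz--Thorin interpolation scheme of Maassen and Uffink (which handles the finite orthonormal basis case) to the continuous 1-bounded Parseval frame setting, and then extract the entropic bound by differentiating the resulting interpolation inequality at the self-dual exponent $p=2$. Concretely, let $T_\tau : \mathcal{H} \to L^2(\Omega,\mu)$ and $T_\omega : \mathcal{H} \to L^2(\Delta,\nu)$ be the analysis operators defined by $(T_\tau h)(\alpha)=\langle h,\tau_\alpha\rangle$ and $(T_\omega h)(\beta)=\langle h,\omega_\beta\rangle$, both of which are isometries by the Parseval property. Define the transition operator
\begin{align*}
U := T_\omega T_\tau^* : L^2(\Omega,\mu)\to L^2(\Delta,\nu).
\end{align*}
A direct computation shows $U$ is the integral operator with kernel $K(\beta,\alpha)=\langle\tau_\alpha,\omega_\beta\rangle$, and the reproducing identity $T_\tau^*T_\tau = I_\mathcal{H}$ gives $UT_\tau h = T_\omega h$ for every $h\in\mathcal{H}$.

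Next I would interpolate $U$ between the extreme Lebesgue pairs $(L^1,L^\infty)$ and $(L^2,L^2)$. The pointwise bound $|K(\beta,\alpha)|\leq c$, where
\begin{align*}
c := \sup_{\alpha\in\Omega,\,\beta\in\Delta}|\langle\tau_\alpha,\omega_\beta\rangle|,
\end{align*}
yields $\|U\|_{L^1\to L^\infty}\leq c$, while $\|U\|_{L^2\to L^2}\leq \|T_\omega\|\,\|T_\tau^*\|\leq 1$. Riesz--Thorin interpolation then produces, for every $p\in[1,2]$ with conjugate exponent $q=p/(p-1)$, the estimate
\begin{align*}
\|T_\omega h\|_{L^q(\Delta,\nu)} = \|U T_\tau h\|_{L^q(\Delta,\nu)} \leq c^{\frac{2}{p}-1}\,\|T_\tau h\|_{L^p(\Omega,\mu)}.
\end{align*}

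To turn this operator-norm inequality into the entropic bound, fix $h\in\mathcal{H}_\tau\cap\mathcal{H}_\omega$ with $\|h\|=1$ (assuming $S_\tau(h)$ and $S_\omega(h)$ are finite, else the conjecture is vacuous) and define
\begin{align*}
\phi(p) := \log\|T_\omega h\|_{p/(p-1)} - \log\|T_\tau h\|_p - \left(\tfrac{2}{p}-1\right)\log c, \qquad p\in[1,2].
\end{align*}
The interpolation gives $\phi(p)\leq 0$, while the Parseval identities $\|T_\tau h\|_2=\|T_\omega h\|_2=1$ force $\phi(2)=0$. Hence $\phi$ attains its maximum at the right endpoint $p=2$, so the one-sided derivative satisfies $\phi'(2^-)\geq 0$. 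A direct differentiation of $p\mapsto p^{-1}\log\int|\langle h,\tau_\alpha\rangle|^p\,d\mu(\alpha)$ at $p=2$, together with the chain rule for $q(p)=p/(p-1)$ (noting $q'(2)=-1$), produces
\begin{align*}
\phi'(2) = \tfrac{1}{4}\bigl(S_\tau(h)+S_\omega(h)\bigr) + \tfrac{1}{2}\log c,
\end{align*}
and the sign constraint rearranges to the conjectured inequality $S_\tau(h)+S_\omega(h)\geq -2\log c$.

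The main obstacle I anticipate lies in the interaction between interpolation and possibly infinite measure. Riesz--Thorin as invoked above is cleanest under $\sigma$-finiteness of $\mu$ and $\nu$, which is mild but is not built into the hypotheses. More substantively, the differentiation step requires $p\mapsto \|T_\tau h\|_p$ to be finite in a one-sided neighbourhood of $p=2$. On the $p\geq 2$ side this is automatic from the 1-bounded condition (since $|\langle h,\tau_\alpha\rangle|\leq 1$ makes $|\langle h,\tau_\alpha\rangle|^p\leq |\langle h,\tau_\alpha\rangle|^2$ integrable), but on the $p<2$ side the $L^p$ norm can diverge when $\mu(\Omega)=\infty$, collapsing the bound into the trivial statement $\infty\leq\infty$. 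Overcoming this would require either an a priori reduction to the essential support of $\alpha\mapsto\langle h,\tau_\alpha\rangle$ (which is $\sigma$-finite since $\|T_\tau h\|_2=1$) combined with a careful dominated-convergence argument for the entropy integrals, or a replacement of Riesz--Thorin by a Hirschman-style complex three-lines argument applied to the analytic family $z\mapsto |\langle h,\tau_\alpha\rangle|^{2(1-z)}\operatorname{sgn}\langle h,\tau_\alpha\rangle$. This analytic reduction, rather than the algebraic skeleton of the argument, is where I expect the continuous conjecture genuinely to differ from its finite-dimensional predecessor.
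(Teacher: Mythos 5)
A preliminary but essential remark: the paper does not prove this statement at all --- it is posed as an open \emph{conjecture} (the continuous analogue of Kraus's conjecture, which Maassen and Uffink settled in the discrete orthonormal case), and the paper itself only establishes the strictly weaker Deutsch-type lower bound $-2\log\bigl((1+c)/2\bigr)$, where $c=\sup_{\alpha,\beta}|\langle\tau_\alpha,\omega_\beta\rangle|$, via the Buzano inequality. So there is no proof in the paper to compare yours against; you are attacking the open problem. Your algebraic skeleton is the correct continuous transplant of the Maassen--Uffink interpolation argument: $U=T_\omega T_\tau^*$ does satisfy $\|U\|_{L^1\to L^\infty}\le c$ and $\|U\|_{L^2\to L^2}\le 1$, Riesz--Thorin (under $\sigma$-finiteness) gives $\|T_\omega h\|_{p'}\le c^{2/p-1}\|T_\tau h\|_p$ for $1\le p\le 2$, and your endpoint-derivative identity $\phi'(2^-)=\tfrac14\bigl(S_\tau(h)+S_\omega(h)\bigr)+\tfrac12\log c$ is computed correctly, so the sign condition $\phi'(2^-)\ge 0$ would indeed yield the conjectured bound. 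The tail inequality $-2\log c\ge 0$ is immediate from Cauchy--Schwarz and $1$-boundedness.

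However, the obstacle you flag in your last paragraph is a genuine gap, not a removable technicality, and your two proposed fixes do not close it. When $\mu(\Omega)=\infty$, finiteness of $S_\tau(h)$ does \emph{not} imply finiteness of $\|T_\tau h\|_p$ for any $p<2$: with $a(\alpha)=|\langle h,\tau_\alpha\rangle|$, take $\Omega=[e,\infty)$ with Lebesgue measure and $a^2(\alpha)$ proportional to $\alpha^{-1}(\log\alpha)^{-3}$; then $\int a^2\,d\mu<\infty$ and $\int a^2\log(1/a^2)\,d\mu<\infty$, yet $\int a^p\,d\mu=\infty$ for every $p<2$. For such an $h$ your function $\phi$ equals $-\infty$ on all of $[1,2)$, the interpolation inequality degenerates to a triviality, and the left derivative at $p=2$ cannot be obtained by differentiating under the integral sign, so the key step $\phi'(2^-)\ge 0$ produces no information. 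Passing to the essential support of $\alpha\mapsto\langle h,\tau_\alpha\rangle$ does not help (in the example that support already has infinite, $\sigma$-finite measure), and a Hirschman-type three-lines argument meets the same obstruction, since the analytic family must still be controlled on the side corresponding to exponents below $2$. As written, your argument proves the conjecture only under an additional hypothesis such as $\mu(\Omega),\nu(\Delta)<\infty$, or $\|T_\tau h\|_p<\infty$ for some $p<2$; in the generality stated, the conjecture remains open after your proposal.
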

Next we derive continuous Deutsch uncertainty for Banach spaces. We need a definition.
\begin{definition}\cite{FAROUGHIOSGOOEI}\label{A}
		Let 	$(\Omega, \mu)$ be a measure space and $\mathcal{X}$  be a   Banach space over $\mathbb{K}$. A collection $\{f_\alpha\}_{\alpha\in \Omega}$ in  $\mathcal{X}^*$  is said to be a \textbf{continuous Parseval p-frame} ($1\leq p <\infty$) for $\mathcal{X}$  if the following conditions  hold.
	\begin{enumerate}[\upshape(i)]
		\item For each $x \in \mathcal{X}$, the map $\Omega \ni \alpha \mapsto f_\alpha (x)\in \mathbb{K}$ is measurable.
		\item 
		\begin{align*}
			\|x\|^p=\displaystyle\int\limits_{\Omega}|f_\alpha (x)|^p\,d\mu(\alpha), \quad \forall x \in \mathcal{X}.
		\end{align*}
	\end{enumerate}
	If $\|\tau_\alpha\|\leq 1$, $\forall \alpha \in \Omega$, then we say that the frame $\{f_\alpha\}_{\alpha\in \Omega}$ is 1-bounded. 
\end{definition}
Similar to Definition \ref{1B}, we set the following.
\begin{definition}
	A continuous Parseval p-frame $\{f_\alpha\}_{\alpha\in \Omega}$   for $\mathcal{X}$ 	is said to be \textbf{1-bounded} if 
	\begin{align*}
		\|f_\alpha\|\leq 1, \quad \forall \alpha \in \Omega.
	\end{align*}
\end{definition}
Given a 1-bounded continuous Parseval p-frame $\{f_\alpha\}_{\alpha\in \Omega}$ for $\mathcal{X}$,   we define the \textbf{continuous p-Shannon entropy} at a point $x \in \mathcal{X}_f $ as 
\begin{align*}
	S_f(x)\coloneqq  -\int\limits_{\Omega}\left|f_\alpha \left(\frac{x}{\|x\|}\right) \right|^p\log\left|f_\alpha \left(\frac{x}{\|x\|}\right) \right|^p\,d\mu(\alpha)\geq 0,
\end{align*}
where $\mathcal{X}_f\coloneqq \{x \in \mathcal{X}:f_\alpha(x)\neq 0, \alpha \in \Omega\}$.

\begin{theorem} \label{FDS}(\textbf{Functional Continuous Deutsch Uncertainty Principle}) Let $(\Omega, \mu)$,  $(\Delta, \nu)$ be   measure spaces and   $\{f_\alpha\}_{\alpha\in \Omega}$, $\{g_\beta\}_{\beta \in \Delta}$ be  1-bounded continuous Parseval p-frames  for a  Banach space $\mathcal{X}$. Then 
	\begin{align*}
	\frac{1}{(\mu(\Omega)\nu(\Delta))^\frac{1}{p}}	\leq \displaystyle\sup_{y \in \mathcal{X}, \|y\|=1}\left(\sup_{\alpha \in \Omega, \beta \in \Delta}|f_\alpha(y)g_\beta(y)|\right)
	\end{align*}
and 
	\begin{align}\label{FD}
		S_f (x)+S_g (x)\geq -p \log \left(\displaystyle\sup_{y \in \mathcal{X}, \|y\|=1}\left(\sup_{\alpha \in \Omega, \beta \in \Delta}|f_\alpha(y)g_\beta(y)|\right)\right)\geq 0, \quad \forall x \in \mathcal{X}_f \cap \mathcal{X}_g.
	\end{align}
\end{theorem}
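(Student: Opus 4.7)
The plan is to mirror the structure of the proof of Theorem \ref{CDUP}, with the role of the Buzano inequality replaced by the trivial bound $|f_\alpha(y)|,|g_\beta(y)|\leq 1$ whenever $\|y\|=1$, which is available because both frames are assumed to be $1$-bounded.

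For the first inequality, I would start from the Parseval identity applied to any unit vector $y$:
\begin{align*}
1=\int_\Omega |f_\alpha(y)|^p\,d\mu(\alpha)\leq \mu(\Omega)\,\sup_{\alpha\in\Omega}|f_\alpha(y)|^p,
\end{align*}
so $\sup_\alpha|f_\alpha(y)|\geq \mu(\Omega)^{-1/p}$. Doing the same for $\{g_\beta\}$ and multiplying the two bounds gives $\sup_{\alpha,\beta}|f_\alpha(y)g_\beta(y)|\geq (\mu(\Omega)\nu(\Delta))^{-1/p}$, and taking the supremum over unit $y$ finishes the first claim.

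For the main (entropy) inequality, fix $x\in \mathcal{X}_f\cap\mathcal{X}_g$ and put $M:=\sup_{\|y\|=1}\sup_{\alpha,\beta}|f_\alpha(y)g_\beta(y)|$. The key identity is obtained by multiplying each of $S_f(x)$ and $S_g(x)$ by the other Parseval integral (which equals $1$) to realise the sum as a single double integral over $\Omega\times\Delta$; combining logs via $\log|a|^p+\log|b|^p=p\log|ab|$ yields
\begin{align*}
S_f(x)+S_g(x)=-p\iint \left|f_\alpha\!\left(\tfrac{x}{\|x\|}\right)\right|^p\!\!\left|g_\beta\!\left(\tfrac{x}{\|x\|}\right)\right|^p\log\left|f_\alpha\!\left(\tfrac{x}{\|x\|}\right)g_\beta\!\left(\tfrac{x}{\|x\|}\right)\right|d\mu(\alpha)d\nu(\beta).
\end{align*}
Since $|f_\alpha(x/\|x\|)g_\beta(x/\|x\|)|\leq M$ pointwise and $M\leq 1$ by the $1$-boundedness hypothesis, we have $-\log|f_\alpha(x/\|x\|)g_\beta(x/\|x\|)|\geq -\log M\geq 0$. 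Integrating this pointwise inequality against the nonnegative density $|f_\alpha(x/\|x\|)|^p|g_\beta(x/\|x\|)|^p$, whose total mass is $1$ by Tonelli and the two Parseval identities, produces the desired bound $S_f(x)+S_g(x)\geq -p\log M$, and simultaneously the nonnegativity $-p\log M\geq 0$.

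I do not expect a serious obstacle: all integrands are nonnegative, so the Tonelli interchange used to set up the product-measure integral is automatic, and the hypothesis $x\in \mathcal{X}_f\cap\mathcal{X}_g$ rules out the vanishing of $f_\alpha(x)$ or $g_\beta(x)$ that would otherwise make $\log$ ill-defined. The only point that requires a moment of care is verifying $M\leq 1$, which must be checked to guarantee both the correct direction of the inequality after taking $-\log$ and the final nonnegativity assertion; this follows immediately from $|f_\alpha(y)|\leq \|f_\alpha\|\,\|y\|\leq 1$ and the analogous bound for $g_\beta$.
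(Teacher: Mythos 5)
Your proposal is correct and follows essentially the same route as the paper's proof: the first inequality comes from the Parseval identities at a unit vector together with the trivial supremum bound, and the entropy bound comes from rewriting $S_f(x)+S_g(x)$ as a double integral against the product density of total mass one and bounding $-\log\left|f_\alpha\left(\tfrac{x}{\|x\|}\right)g_\beta\left(\tfrac{x}{\|x\|}\right)\right|$ from below by $-\log M$. Your explicit check that $M\leq 1$ via $1$-boundedness is a welcome addition, since the paper asserts the final nonnegativity in (\ref{FD}) without spelling that step out.
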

\begin{proof}
	Let $z \in \mathcal{X}$ be such that $\|z\|=1$. Then 
	\begin{align*}
		1&=\left(\int\limits_{\Omega}|f_\alpha(z)|^p\,d\mu(\alpha)\right)\left(\int\limits_{\Delta}|g_\beta(z)|^p\,d\nu(\beta)\right)=\int\limits_{\Omega}\int\limits_{\Delta}
	|f_\alpha(z)g_\beta(z)|^p\,d\nu(\beta)\,d\mu(\alpha)\\
	&\leq \int\limits_{\Omega}\int\limits_{\Delta}\left(\displaystyle\sup_{y \in \mathcal{X}, \|y\|=1}\left(\sup_{\alpha \in \Omega, \beta \in \Delta}|f_\alpha(y)g_\beta(y)|\right)\right)^p\,d\nu(\beta)\,d\mu(\alpha)\\
	&=\left(\displaystyle\sup_{y \in \mathcal{X}, \|y\|=1}\left(\sup_{\alpha \in \Omega, \beta \in \Delta}|f_\alpha(y)g_\beta(y)|\right)\right)^p\mu(\Omega)\nu(\Delta)
\end{align*}
which gives 
\begin{align*}
	\frac{1}{\mu(\Omega)\nu(\Delta)}\leq \left(\displaystyle\sup_{y \in \mathcal{X}, \|y\|=1}\left(\sup_{\alpha \in \Omega, \beta \in \Delta}|f_\alpha(y)g_\beta(y)|\right)\right)^p.
\end{align*}
Let $x \in  \mathcal{X}_f \cap \mathcal{X}_g$. Then 
\begin{align*}
	S_f (x)+S_g (x)&=	-\int\limits_{\Omega}\int\limits_{\Delta}\left|f_\alpha\left(\frac{x}{\|x\|}\right)\right|^p\left|g_\beta\left(\frac{x}{\|x\|}\right)\right|^p\left[\log \left|f_\alpha\left(\frac{x}{\|x\|}\right)\right|^p+\log \left|g_\beta\left(\frac{x}{\|x\|}\right)\right|^p\right]\,d\nu(\beta)\,d\mu(\alpha)\\
&=-\int\limits_{\Omega}\int\limits_{\Delta}\left|f_\alpha\left(\frac{x}{\|x\|}\right)\right|^p\left|g_\beta\left(\frac{x}{\|x\|}\right)\right|^p\log \left|f_\alpha\left(\frac{x}{\|x\|}\right)g_\beta\left(\frac{x}{\|x\|}\right)\right|^p\,d\nu(\beta)\,d\mu(\alpha)\\
&=-p\int\limits_{\Omega}\int\limits_{\Delta}\left|f_\alpha\left(\frac{x}{\|x\|}\right)\right|^p\left|g_\beta\left(\frac{x}{\|x\|}\right)\right|^p\log \left|f_\alpha\left(\frac{x}{\|x\|}\right)g_\beta\left(\frac{x}{\|x\|}\right)\right|\,d\nu(\beta)\,d\mu(\alpha)\\
&\geq -p\int\limits_{\Omega}\int\limits_{\Delta}\left|f_\alpha\left(\frac{x}{\|x\|}\right)\right|^p\left|g_\beta\left(\frac{x}{\|x\|}\right)\right|^p\log\left(\displaystyle\sup_{y \in \mathcal{X}, \|y\|=1}\left(\sup_{\alpha \in \Omega, \beta \in \Delta}|f_\alpha(y)g_\beta(y)|\right)\right)\,d\nu(\beta)\,d\mu(\alpha)\\
&=-p\log\left(\displaystyle\sup_{y \in \mathcal{X}, \|y\|=1}\left(\sup_{\alpha \in \Omega, \beta \in \Delta}|f_\alpha(y)g_\beta(y)|\right)\right)\int\limits_{\Omega}\int\limits_{\Delta}\left|f_\alpha\left(\frac{x}{\|x\|}\right)\right|^p\left|g_\beta\left(\frac{x}{\|x\|}\right)\right|^p\,d\nu(\beta)\,d\mu(\alpha)\\
&=-p\log\left(\displaystyle\sup_{y \in \mathcal{X}, \|y\|=1}\left(\sup_{\alpha \in \Omega, \beta \in \Delta}|f_\alpha(y)g_\beta(y)|\right)\right).
\end{align*}
\end{proof}
\begin{corollary}
	Theorem \ref{DU} follows from Theorem \ref{FDS}.
\end{corollary}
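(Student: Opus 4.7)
The plan is to specialize Theorem \ref{FDS} to $p=2$ with counting measure on finite index sets, translate to the Hilbert setting via the Riesz representation, and invoke Buzano's inequality to convert the functional bound into Deutsch's form.

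First, I would set $\mathcal{X}=\mathcal{H}$, $p=2$, and $(\Omega,\mu)=(\Delta,\nu)=(\{1,\ldots,n\},\text{counting measure})$. The orthonormal bases $\{\tau_j\}_{j=1}^n$ and $\{\omega_k\}_{k=1}^n$ correspond, via $f_j(\cdot)=\langle\cdot,\tau_j\rangle$ and $g_k(\cdot)=\langle\cdot,\omega_k\rangle$, to families in $\mathcal{H}^*$. A quick check shows these are $1$-bounded continuous Parseval $2$-frames for $\mathcal{H}$: Parseval's identity gives $\|x\|^2=\sum_{j}|f_j(x)|^2=\sum_{k}|g_k(x)|^2$, while $\|f_j\|=\|\tau_j\|=1$ and $\|g_k\|=\|\omega_k\|=1$. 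The corresponding $2$-Shannon entropies coincide with $S_\tau(h)$ and $S_\omega(h)$ of Theorem \ref{DU}, and $\mathcal{H}_\tau\cap\mathcal{H}_\omega = \mathcal{X}_f\cap\mathcal{X}_g$. Applying Theorem \ref{FDS} therefore yields
\begin{align*}
S_\tau(h)+S_\omega(h)\geq -2\log\left(\sup_{y\in\mathcal{H},\,\|y\|=1}\max_{1\leq j,k\leq n}|\langle y,\tau_j\rangle\langle y,\omega_k\rangle|\right).
\end{align*}

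The key step is then to bound the inner supremum using Buzano's inequality: for any unit vector $y$ and any $j,k$,
\begin{align*}
|\langle y,\tau_j\rangle\langle y,\omega_k\rangle|\leq \frac{\|y\|^2\bigl(\|\tau_j\|\|\omega_k\|+|\langle\tau_j,\omega_k\rangle|\bigr)}{2}=\frac{1+|\langle\tau_j,\omega_k\rangle|}{2}.
\end{align*}
Taking the maximum over $j,k$ on the right and invoking the monotonicity of $-\log$ recovers the nontrivial lower bound of Theorem \ref{DU}. The remaining upper bound $S_\tau(h)+S_\omega(h)\leq 2\log n$ is an immediate Jensen argument applied to each entropy separately, exactly as in the first half of the proof of Theorem \ref{CDUP}, noting that $\log(\mu(\Omega)\nu(\Delta))=\log(n^2)=2\log n$.

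The principal obstacle, such as it is, is precisely the Buzano bridge: it is the device that converts the abstract functional supremum on the right-hand side of Theorem \ref{FDS} into the explicit inner-product quantity $\max_{j,k}|\langle\tau_j,\omega_k\rangle|$ of Theorem \ref{DU}. Everything else reduces to the routine identification of orthonormal bases with discrete $1$-bounded Parseval $2$-frames in the functional setting.
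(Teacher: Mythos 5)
Your proposal is correct and follows essentially the same route as the paper: identify the vectors with the functionals $f_\alpha(\cdot)=\langle\cdot,\tau_\alpha\rangle$, $g_\beta(\cdot)=\langle\cdot,\omega_\beta\rangle$, apply Theorem \ref{FDS}, and use Buzano's inequality to dominate the functional supremum by $\bigl(1+\max_{j,k}|\langle\tau_j,\omega_k\rangle|\bigr)/2$. Your version is slightly more complete in that you make the counting-measure specialization explicit and supply the Jensen argument for the upper bound $2\log n$, which the paper's proof of the corollary leaves implicit.
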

\begin{proof}
	Let $(\Omega, \mu)$,  $(\Delta, \nu)$ be   measure spaces  and $\{\tau_\alpha\}_{\alpha\in \Omega}$, $\{\omega_\beta\}_{\beta \in \Delta}$ be  	1-bounded continuous Parseval frames for a  Hilbert  space $\mathcal{H}$. Define 
	\begin{align*}
		&f_\alpha:\mathcal{H} \ni h \mapsto \langle h, \tau_\alpha \rangle \in \mathbb{K}; \quad \forall \alpha \in \Omega, \\
		& g_\beta:\mathcal{H} \ni h \mapsto \langle h, \omega_\beta \rangle \in \mathbb{K}, \quad \forall \beta \in \Delta.
	\end{align*}
Now by using Buzano inequality \cite{BUZANO,  FFUJIIKUBO} we get 
\begin{align*}
	\displaystyle\sup_{h \in \mathcal{H}, \|h\|=1}\left(\sup_{\alpha \in \Omega,\beta \in \Delta  }|f_\alpha(h)g_\beta(h)|\right)&=\displaystyle\sup_{h \in \mathcal{H}, \|h\|=1}\left(\sup_{\alpha \in \Omega,\beta \in \Delta }|\langle h, \tau_\alpha \rangle||\langle h, \omega_\beta \rangle|\right)\\
	&\leq \displaystyle\sup_{h \in \mathcal{H}, \|h\|=1}\left(\sup_{\alpha \in \Omega,\beta \in \Delta  }\left(\|h\|^2\frac{\|\tau_\alpha\|\|\omega_\beta\|+|\langle \tau_\alpha, \omega_\beta \rangle |}{2}\right)\right)\\
	&=\frac{1+\displaystyle\sup_{\alpha \in \Omega,\beta \in \Delta }|\langle \tau_j, \omega_k \rangle |}{2}.
\end{align*}
\end{proof}
Theorem  \ref{FDS}  brings the following question.
\begin{question}
 Let $(\Omega, \mu)$,  $(\Delta, \nu)$ be   measure spaces,    $\mathcal{X}$ be a Banach space and $p>1$. For which pairs of continuous Parseval  p-frames $\{f_\alpha\}_{\alpha\in \Omega}, $  and   $\{g_\beta\}_{\beta\in \Delta}$ for  $\mathcal{X}$, we have equality in Inequality (\ref{FD})?
\end{question}
Next we derive a dual inequality of (\ref{FD}). For this we need dual of Definition \ref{A}.
\begin{definition}
	Let 	$(\Omega, \mu)$ be a measure space and $\mathcal{X}$  be a   Banach space over $\mathbb{K}$. A collection $\{\tau_\alpha\}_{\alpha\in \Omega}$ in  $\mathcal{X}$  is said to be a \textbf{continuous Parseval p-frame} ($1\leq p <\infty$) for $\mathcal{X}^*$  if the following conditions  hold.
	\begin{enumerate}[\upshape(i)]
		\item For each $\phi \in \mathcal{X}^*$, the map $\Omega \ni \alpha \mapsto \phi(\tau_\alpha)\in \mathbb{K}$ is measurable.
		\item 
		\begin{align*}
			\|f\|^p=\int_{\Omega}|f(\tau_\alpha)|^p\,d\mu(\alpha), \quad \forall f \in \mathcal{X}^*.
		\end{align*}
	\end{enumerate}
	If $\|\tau_\alpha\|\leq 1$, $\forall \alpha \in \Omega$, then we say that the frame $\{\tau_\alpha\}_{\alpha\in \Omega}$ is 1-bounded. 	
\end{definition}

Given a 1-bounded continuous Parseval p-frame $\{\tau_\alpha\}_{\alpha\in \Omega}$ for $\mathcal{X}^*$,   we define the \textbf{continuous p-Shannon entropy} at a point $f \in \mathcal{X}_\tau^*$ as 
\begin{align*}
	S_f(x)\coloneqq  -\int\limits_{\Omega}\left|\frac{f(\tau_\alpha)}{\|f\|}\right|^p\log \left|\frac{f(\tau_\alpha)}{\|f\|}\right|^p\,d\mu(\alpha)\geq 0, 
\end{align*}
where $\mathcal{X}_\tau^*\coloneqq \{f \in \mathcal{X}: f(\tau_\alpha)\neq 0, \alpha \in \Omega\}\}$.
We now have the following dual to Theorem \ref{FDS}.
\begin{theorem}\label{DUALFDS}
	(\textbf{Continuous Deutsch Uncertainty Principle for Banach spaces}) 
	Let $(\Omega, \mu)$,  $(\Delta, \nu)$ be   measure spaces and   $\{\tau_\alpha\}_{\alpha\in \Omega}$, $\{\omega_\beta\}_{\beta \in \Delta}$ be  1-bounded continuous Parseval p-frames  for the dual $\mathcal{X}^*$ of a  Banach space $\mathcal{X}$. Then 
	\begin{align*}
		\frac{1}{(\mu(\Omega)\nu(\Delta))^\frac{1}{p}}	\leq \displaystyle\sup_{g \in \mathcal{X}^*, \|g\|=1}\left(\sup_{\alpha \in \Omega, \beta \in \Delta}|g(\tau_\alpha)g(\omega_\beta)|\right)
	\end{align*}
	and 
	\begin{align}\label{DFDS}
		S_\tau (f)+S_\omega (f)\geq -p \log \left(\displaystyle\sup_{g \in \mathcal{X}^*, \|g\|=1}\left(\sup_{\alpha \in \Omega, \beta \in \Delta}|g(\tau_\alpha)g(\omega_\beta)|\right)\right)\geq 0, \quad \forall f \in \mathcal{X}_\tau^*\cap \mathcal{X}_\omega^*.
	\end{align}
\end{theorem}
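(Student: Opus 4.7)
The plan is to mirror the proof of Theorem \ref{FDS}, exchanging the roles of $\mathcal{X}$ and $\mathcal{X}^*$: the supremum over unit vectors $y \in \mathcal{X}$ there is replaced by a supremum over unit functionals $g \in \mathcal{X}^*$, which we will realize by specializing $g = f/\|f\|$ at the appropriate moment. The two ingredients available are (i) the defining Parseval p-frame identity $\|g\|^p = \int_\Omega |g(\tau_\alpha)|^p d\mu(\alpha)$ (and its analogue for $\{\omega_\beta\}$), and (ii) Fubini's theorem to turn the sum of single integrals into a single double integral, which is what makes the entropy decouple cleanly.

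For the first inequality, I fix any $g \in \mathcal{X}^*$ with $\|g\|=1$. Multiplying the two Parseval identities and applying Fubini gives
\begin{align*}
1 = \int_\Omega |g(\tau_\alpha)|^p d\mu(\alpha) \cdot \int_\Delta |g(\omega_\beta)|^p d\nu(\beta) = \int_\Omega \int_\Delta |g(\tau_\alpha) g(\omega_\beta)|^p\, d\nu(\beta)\, d\mu(\alpha).
\end{align*}
Bounding the integrand by the supremum and pulling the constant out yields $1 \leq \left(\sup_{g,\alpha,\beta}|g(\tau_\alpha)g(\omega_\beta)|\right)^p \mu(\Omega)\nu(\Delta)$, which gives the claimed bound after extracting the $p$-th root.

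For the entropy inequality, fix $f \in \mathcal{X}_\tau^*\cap \mathcal{X}_\omega^*$ and write $\tilde f = f/\|f\|$, a unit functional. Using $\int_\Delta |\tilde f(\omega_\beta)|^p d\nu(\beta) = 1$ inside $S_\tau(f)$ and symmetrically for $S_\omega(f)$, I combine the two entropies into a single double integral:
\begin{align*}
S_\tau(f) + S_\omega(f) = -\int_\Omega \int_\Delta |\tilde f(\tau_\alpha)|^p |\tilde f(\omega_\beta)|^p \log\big|\tilde f(\tau_\alpha)\tilde f(\omega_\beta)\big|^p\, d\nu(\beta)\, d\mu(\alpha).
\end{align*}
Pulling the factor $p$ outside the logarithm and then using that $|\tilde f(\tau_\alpha)\tilde f(\omega_\beta)| \leq M$, where $M$ denotes the supremum in the statement (legal since $\tilde f$ is a unit functional), together with the monotonicity of $\log$, replaces the integrand's log by $\log M$. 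The resulting prefactor integrates to $1$ by the same Fubini identity used above, producing $S_\tau(f) + S_\omega(f) \geq -p \log M$.

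The non-negativity $-p \log M \geq 0$ follows from the first part of the theorem together with $\mu(\Omega)\nu(\Delta) \geq 1$; more directly, the first inequality gives $M^p \geq 1/(\mu(\Omega)\nu(\Delta))$, but under the 1-boundedness of the frames the Parseval identity $1 = \|\tilde f\|^p = \int |\tilde f(\tau_\alpha)|^p d\mu(\alpha) \leq \mu(\Omega)$ (since $|\tilde f(\tau_\alpha)|\leq \|\tilde f\|\|\tau_\alpha\|\leq 1$) forces $\mu(\Omega) \geq 1$, and likewise $\nu(\Delta) \geq 1$, so $M \leq 1$. I do not expect a genuine obstacle here: the argument is a direct dualization of Theorem \ref{FDS}, and the only subtlety is noting that in the dual setting the normalized test element $f/\|f\|$ lies in $\mathcal{X}^*$, which is exactly where the supremum is taken.
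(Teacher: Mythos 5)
Your proposal is correct and follows essentially the same route as the paper: multiply the two Parseval identities and apply Fubini for the first inequality, then merge the two entropies into a double integral and bound the logarithm by the supremum for the second. The only quibble is your justification of $M\leq 1$ at the end, which should come directly from $1$-boundedness via $|\tilde f(\tau_\alpha)\tilde f(\omega_\beta)|\leq \|\tilde f\|^2\|\tau_\alpha\|\|\omega_\beta\|\leq 1$ rather than from $\mu(\Omega),\nu(\Delta)\geq 1$ (which only yields a lower bound on $M$); this is a minor point the paper itself leaves implicit.
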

\begin{proof}
	Let $h \in \mathcal{X}^*$ be such that $\|h\|=1$. Then 
\begin{align*}
	1&=\left(\int\limits_{\Omega}|h(\tau_\alpha)|^p\,d\mu(\alpha)\right)\left(\int\limits_{\Delta}h(\omega_\beta)|^p\,d\nu(\beta)\right)=\int\limits_{\Omega}\int\limits_{\Delta}
	|h(\tau_\alpha)h(\omega_\beta)|^p\,d\nu(\beta)\,d\mu(\alpha)\\
	&\leq \int\limits_{\Omega}\int\limits_{\Delta}\left(\displaystyle\sup_{g \in \mathcal{X}^*, \|g\|=1}\left(\sup_{\alpha \in \Omega, \beta \in \Delta}|g(\tau_\alpha)g(\omega_\beta)|\right)\right)^p\,d\nu(\beta)\,d\mu(\alpha)\\
	&=\left(\displaystyle\sup_{g \in \mathcal{X}^*, \|g\|=1}\left(\sup_{\alpha \in \Omega, \beta \in \Delta }|g(\tau_\alpha)g(\omega_\beta)|\right)\right)^p\mu(\Omega)\nu(\Delta)
\end{align*}	
which gives 
\begin{align*}
	\frac{1}{\mu(\Omega)\nu(\Delta)}\leq \left(\displaystyle\sup_{g \in \mathcal{X}^*, \|g\|=1}\left(\sup_{\alpha \in \Omega, \beta \in \Delta}|g(\tau_\alpha)g(\omega_\beta)|\right)\right)^p.
\end{align*}
Let $f \in \mathcal{X}_\tau^*\cap \mathcal{X}_\omega^*$. Then 	
\begin{align*}
		S_\tau (f)+S_\omega (f)&=-\int\limits_{\Omega}\int\limits_{\Delta}\left|\frac{f(\tau_\alpha)}{\|f\|}\right|^p\left|\frac{f(\omega_\beta)}{\|f\|}\right|^p\left[\log \left|\frac{f(\tau_\alpha)}{\|f\|}\right|^p+\log \left|\frac{f(\omega_\beta)}{\|f\|}\right|^p\right]\,d\nu(\beta)\,d\mu(\alpha)\\
		&=-\int\limits_{\Omega}\int\limits_{\Delta}\left|\frac{f(\tau_\alpha)}{\|f\|}\right|^p\left|\frac{f(\omega_\beta)}{\|f\|}\right|^p\log \left|\frac{f(\tau_\alpha)}{\|f\|}\frac{f(\omega_\beta)}{\|f\|}\right|^p\,d\nu(\beta)\,d\mu(\alpha)\\
		&=-p\int\limits_{\Omega}\int\limits_{\Delta}\left|\frac{f(\tau_\alpha)}{\|f\|}\right|^p\left|\frac{f(\omega_\beta)}{\|f\|}\right|^p\log \left|\frac{f(\tau_\alpha)}{\|f\|}\frac{f(\omega_\beta)}{\|f\|}\right|\,d\nu(\beta)\,d\mu(\alpha)\\
		&\geq -p \int\limits_{\Omega}\int\limits_{\Delta}\left|\frac{f(\tau_\alpha)}{\|f\|}\right|^p\left|\frac{f(\omega_\beta)}{\|f\|}\right|^p\log \left(\displaystyle\sup_{g \in \mathcal{X}^*, \|g\|=1}\left(\sup_{\alpha \in \Omega, \beta \in \Delta}|g(\tau_\alpha)g(\omega_\beta)|\right)\right)\,d\nu(\beta)\,d\mu(\alpha)\\
		&=-p\log \left(\displaystyle\sup_{g \in \mathcal{X}^*, \|g\|=1}\left(\sup_{\alpha \in \Omega, \beta \in \Delta}|g(\tau_\alpha)g(\omega_\beta)|\right)\right)\int\limits_{\Omega}\int\limits_{\Delta}\left|\frac{f(\tau_\alpha)}{\|f\|}\right|^p\left|\frac{f(\omega_\beta)}{\|f\|}\right|^p\,d\nu(\beta)\,d\mu(\alpha)\\
		&=-p\log \left(\displaystyle\sup_{g \in \mathcal{X}^*, \|g\|=1}\left(\sup_{\alpha \in \Omega, \beta \in \Delta}|g(\tau_\alpha)g(\omega_\beta)|\right)\right).
\end{align*}
\end{proof}
Theorem  \ref{DUALFDS}  again gives the following question.
\begin{question}
	 Let $(\Omega, \mu)$,  $(\Delta, \nu)$ be   measure spaces,    $\mathcal{X}$ be a Banach space and $p>1$. For which pairs of continuous Parseval  p-frames $\{\tau_\alpha\}_{\alpha\in \Omega}, $  and   $\{\omega_\beta\}_{\beta\in \Delta}$ for  $\mathcal{X}^*$, we have equality in Inequality (\ref{DFDS})
\end{question}

 \bibliographystyle{plain}
 \bibliography{reference.bib}

\end{document}